\documentclass{conm-p-l}
\usepackage{amssymb}

\usepackage{amsfonts}
\usepackage{eucal}
\usepackage{upgreek}
%
%

\begin{document}

\font\eightrm=cmr8
\font\eightit=cmti8
\font\eighttt=cmtt8
\font\tensans=cmss10
\def\emp{\text{\tensans\O}}
\def\tci
{\hbox{\hskip1.8pt$\rightarrow$\hskip-11.5pt$^{^{C^\infty}}$\hskip-1.3pt}}
\def\nft
{\hbox{$n$\hskip3pt$\equiv$\hskip4pt$5$\hskip4.4pt$($mod\hskip2pt$3)$}}
\def\bbR{\mathrm{I\!R}}
\def\rto{\bbR\hskip-.5pt^2}
\def\rtr{\bbR\hskip-.7pt^3}
\def\rfo{\bbR\hskip-.7pt^4}
\def\rn{\bbR^{\hskip-.6ptn}}
\def\mr{\bbR^{\hskip-.6ptm}}
\def\bbZ{\mathsf{Z\hskip-4ptZ}}
\def\bbRP{\text{\bf R}\text{\rm P}}
\def\bbC{{\mathchoice {\setbox0=\hbox{$\displaystyle\rm C$}\hbox{\hbox
to0pt{\kern0.4\wd0\vrule height0.9\ht0\hss}\box0}}
{\setbox0=\hbox{$\textstyle\rm C$}\hbox{\hbox
to0pt{\kern0.4\wd0\vrule height0.9\ht0\hss}\box0}}
{\setbox0=\hbox{$\scriptstyle\rm C$}\hbox{\hbox
to0pt{\kern0.4\wd0\vrule height0.9\ht0\hss}\box0}}
{\setbox0=\hbox{$\scriptscriptstyle\rm C$}\hbox{\hbox
to0pt{\kern0.4\wd0\vrule height0.9\ht0\hss}\box0}}}}
\def\bbQ{{\mathchoice {\setbox0=\hbox{$\displaystyle\rm Q$}\hbox{\raise
0.15\ht0\hbox to0pt{\kern0.4\wd0\vrule height0.8\ht0\hss}\box0}}
{\setbox0=\hbox{$\textstyle\rm Q$}\hbox{\raise
0.15\ht0\hbox to0pt{\kern0.4\wd0\vrule height0.8\ht0\hss}\box0}}
{\setbox0=\hbox{$\scriptstyle\rm Q$}\hbox{\raise
0.15\ht0\hbox to0pt{\kern0.4\wd0\vrule height0.7\ht0\hss}\box0}}
{\setbox0=\hbox{$\scriptscriptstyle\rm Q$}\hbox{\raise
0.15\ht0\hbox to0pt{\kern0.4\wd0\vrule height0.7\ht0\hss}\box0}}}}
\def\bbQ{{\mathchoice {\setbox0=\hbox{$\displaystyle\rm Q$}\hbox{\raise
0.15\ht0\hbox to0pt{\kern0.4\wd0\vrule height0.8\ht0\hss}\box0}}
{\setbox0=\hbox{$\textstyle\rm Q$}\hbox{\raise
0.15\ht0\hbox to0pt{\kern0.4\wd0\vrule height0.8\ht0\hss}\box0}}
{\setbox0=\hbox{$\scriptstyle\rm Q$}\hbox{\raise
0.15\ht0\hbox to0pt{\kern0.4\wd0\vrule height0.7\ht0\hss}\box0}}
{\setbox0=\hbox{$\scriptscriptstyle\rm Q$}\hbox{\raise
0.15\ht0\hbox to0pt{\kern0.4\wd0\vrule height0.7\ht0\hss}\box0}}}}
\def\dimr{\dim_{\hskip.4pt\bbR\hskip-1.2pt}\w}
\def\dimc{\dim_{\hskip.4pt\bbC\hskip-1.2pt}\w}
\def\aff{\mathrm{A\hn f\hh f}\hs}
\def\cx{C\hskip-2pt_x\w}
\def\cy{C\hskip-2pt_y\w}
\def\cz{C\hskip-2pt_z\w}
\def\hyp{\hskip.5pt\vbox
{\hbox{\vrule width3ptheight0.5ptdepth0pt}\vskip2.2pt}\hskip.5pt}
\def\er{r}
\def\es{s}
\def\df{d\hskip-.8ptf}
\def\dz{\mathcal{D}}
\def\dzp{\dz^\perp}
\def\fv{\mathcal{F}}
\def\gr{\mathcal{G}}
\def\fvp{\fv_{\nrmh p}}
\def\wv{\mathcal{W}}
\def\vt{\mathcal{P}}
\def\tv{\mathcal{T}}
\def\vr{\mathcal{V}}
\def\xs{J}
\def\xl{S}
\def\cs{\mathcal{B}}
\def\zy{\mathcal{Z}}
\def\vtx{\vt_{\nh x}}
\def\fh{f}
\def\g{\mathtt{g}}
\def\rc{\theta}
\def\jm{\mathcal{I}}
\def\ke{\mathcal{K}}
\def\xc{\mathcal{X}_c}
\def\lz{\mathcal{L}}
\def\dla{\mathcal{D}_{\hskip-2ptL}^*}
\def\Lie{\pounds}
\def\lv{\Lie\hskip-1.2pt_v\w}
\def\lo{\lz_0}
\def\xe{\mathcal{E}}
\def\eo{\xe_0}
\def\lsq{\mathsf{[}}
\def\rsq{\mathsf{]}}
\def\hga{\hskip2.3pt\widehat{\hskip-2.3pt\gamma\hskip-2pt}\hskip2pt}
\def\hm{\hskip1.9pt\widehat{\hskip-1.9ptM\hskip-.2pt}\hskip.2pt}
\def\hg{\hskip.9pt\widehat{\hskip-.9pt\g\hskip-.9pt}\hskip.9pt}
\def\hna{\hskip.2pt\widehat{\hskip-.2pt\nabla\hskip-1.6pt}\hskip1.6pt}
\def\hdz{\hskip.9pt\widehat{\hskip-.9pt\dz\hskip-.9pt}\hskip.9pt}
\def\hdp{\hskip.9pt\widehat{\hskip-.9pt\dz\hskip-.9pt}\hskip.9pt^\perp}
\def\hmt{\hskip1.9pt\widehat{\hskip-1.9ptM\hskip-.5pt}_t}
\def\hmz{\hskip1.9pt\widehat{\hskip-1.9ptM\hskip-.5pt}_0}
\def\hmp{\hskip1.9pt\widehat{\hskip-1.9ptM\hskip-.5pt}_p}
\def\hk{\hskip1.5pt\widehat{\hskip-1.5ptK\hskip-.5pt}\hskip.5pt}
\def\hq{\hskip1.5pt\widehat{\hskip-1.5ptQ\hskip-.5pt}\hskip.5pt}
\def\txm{{T\hskip-2.9pt_x\w\hn M}}
\def\tyhm{{T\hskip-3.5pt_y\w\hm}}
\def\q{q}
\def\bq{\hat q}
\def\p{p}
\def\w{^{\phantom i}}
\def\x{u}
\def\y{v}
\def\vp{{\tau\hskip-4.55pt\iota\hskip.6pt}}
\def\evp{{\tau\hskip-3.55pt\iota\hskip.6pt}}
\def\vd{\vt\hh'}
\def\vdx{\vd{}\hskip-4.5pt_x}
\def\bz{b\hh}
\def\fe{F}
\def\fy{\phi}
\def\vl{\Lambda}
\def\hy{\mathcal{V}}
\def\vh{h}
\def\bc{C}
\def\mv{V}
\def\vo{V_{\nnh0}}
\def\ao{A_0}
\def\bo{B_0}
\def\uv{\mathcal{U}}
\def\sv{\mathcal{S}}
\def\svp{\sv_p}
\def\xv{\mathcal{X}}
\def\xvp{\xv_p}
\def\yv{\mathcal{Y}}
\def\yvp{\yv_p}
\def\zv{\mathcal{Z}}
\def\zvp{\zv_p}
\def\cv{\mathcal{C}}
\def\dy{\mathcal{D}}
\def\nv{\mathcal{N}}
\def\iv{\mathcal{I}}
\def\sgp{\sigma\hskip-1.8pt_p\w}
\def\gkp{\Sigma}
\def\ret{\sigma}
\def\taw{\uptau}
\def\hs{\hskip.7pt}
\def\hh{\hskip.4pt}
\def\hn{\hskip-.4pt}
\def\nh{\hskip-.7pt}
\def\nnh{\hskip-1pt}
\def\hrz{^{\hskip.5pt\text{\rm hrz}}}
\def\vrt{^{\hskip.2pt\text{\rm vrt}}}
\def\vt{\varTheta}
\def\mtr{\Theta}
\def\op{\varTheta}
\def\vg{\varGamma}
\def\my{\mu}
\def\ny{\nu}
\def\gy{\lambda}
\def\lp{\lambda}
\def\ax{\alpha}
\def\lf{\widetilde{\lp}}
\def\bx{\beta}
\def\ay{a}
\def\by{b}
\def\gp{\mathrm{G}}
\def\hp{\mathrm{H}}
\def\kp{\mathrm{K}}
\def\gm{\gamma}
\def\Gm{\Gamma}
\def\Lm{\Lambda}
\def\Dt{\Delta}
\def\dg{\Delta}
\def\sj{\sigma}
\def\lg{\langle}
\def\rg{\rangle}
\def\lr{\langle\hh\cdot\hs,\hn\cdot\hh\rangle}
\def\vs{vector space}
\def\rvs{real vector space}
\def\vf{vector field}
\def\tf{tensor field}
\def\tvn{the vertical distribution}
\def\dn{distribution}
\def\pt{point}
\def\tc{tor\-sion\-free connection}
\def\ea{equi\-af\-fine}
\def\rt{Ric\-ci tensor}
\def\pde{partial differential equation}
\def\pf{projectively flat}
\def\pfs{projectively flat surface}
\def\pfc{projectively flat connection}
\def\pftc{projectively flat tor\-sion\-free connection}
\def\su{surface}
\def\sco{simply connected}
\def\psr{pseu\-\hbox{do\hs-}Riem\-ann\-i\-an}
\def\inv{-in\-var\-i\-ant}
\def\trinv{trans\-la\-tion\inv}
\def\feo{dif\-feo\-mor\-phism}
\def\feic{dif\-feo\-mor\-phic}
\def\feicly{dif\-feo\-mor\-phi\-cal\-ly}
\def\Feicly{Dif-feo\-mor\-phi\-cal\-ly}
\def\diml{-di\-men\-sion\-al}
\def\prl{-par\-al\-lel}
\def\skc{skew-sym\-met\-ric}
\def\sky{skew-sym\-me\-try}
\def\Sky{Skew-sym\-me\-try}
\def\dbly{-dif\-fer\-en\-ti\-a\-bly}
\def\cf{con\-for\-mal\-ly flat}
\def\ls{locally symmetric}
\def\ecs{essentially con\-for\-mal\-ly symmetric}
\def\rr{Ric\-ci-re\-cur\-rent}
\def\kf{Kil\=ling field}
\def\om{\omega}
\def\vol{\varOmega}
\def\og{\varOmega\hs}
\def\dv{\delta}
\def\ve{\varepsilon}
\def\zt{\zeta}
\def\kx{\kappa}
\def\mf{manifold}
\def\mfd{-man\-i\-fold}
\def\bmf{base manifold}
\def\bd{bundle}
\def\tbd{tangent bundle}
\def\ctb{cotangent bundle}
\def\bp{bundle projection}
\def\prc{pseu\-\hbox{do\hs-}Riem\-ann\-i\-an metric}
\def\prd{pseu\-\hbox{do\hs-}Riem\-ann\-i\-an manifold}
\def\Prd{pseu\-\hbox{do\hs-}Riem\-ann\-i\-an manifold}
\def\npd{null parallel distribution}
\def\pj{-pro\-ject\-a\-ble}
\def\pd{-pro\-ject\-ed}
\def\lcc{Le\-vi-Ci\-vi\-ta connection}
\def\vb{vector bundle}
\def\vbm{vec\-tor-bun\-dle morphism}
\def\kerd{\text{\rm Ker}\hskip2.7ptd}
\def\ro{\rho}
\def\sy{\sigma}
\def\ts{total space}
\def\pmb{\pi}
\def\pl{\partial}
\def\cro{\overline{\hskip-2pt\pl}}
\def\ddb{\pl\hskip1.7pt\cro}
\def\dbd{\cro\hskip-.3pt\pl}
\newtheorem{theorem}{Theorem}[section] 
\newtheorem{proposition}[theorem]{Proposition} 
\newtheorem{lemma}[theorem]{Lemma} 
\newtheorem{corollary}[theorem]{Corollary} 
  
\theoremstyle{definition} 
  
\newtheorem{defn}[theorem]{Definition} 
\newtheorem{notation}[theorem]{Notation} 
\newtheorem{example}[theorem]{Example} 
\newtheorem{conj}[theorem]{Conjecture} 
\newtheorem{prob}[theorem]{Problem} 
  
\theoremstyle{remark} 
  
\newtheorem{remark}[theorem]{Remark}

\title[Af\-fine vector fields]{Af\-fine vector fields on compact
 pseu\-do\hs-\nh K\"ahler manifolds}
\author[A. Derdzinski]{Andrzej Derdzinski} 
\address{Department of Mathematics, The Ohio State University, 
Columbus, OH 43210, USA} 
\email{andrzej@math.ohio-state.edu} 
\subjclass[2020]{Primary 53C50; Secondary 53C56}
\keywords{Compact pseu\-do\hs-K\"ah\-ler manifold, field, af\-fine
vector field, real-hol\-o\-mor\-phic vector field}
\dedicatory{Dedicated to Paolo Piccione on the occasion of his 60th birthday}
\thanks{The author wishes to thank Ivo Terek for very helpful discussions,
and the anonymous referee for numerous comments and suggestions that greatly
improved the exposition.}
\thanks{Research supported in part by a FAPESP\hn-\hh OSU 2015 Regular
Research Award (FAPESP grant: 2015/50265-6)}
\def\leftmark{A.\ Derdzinski}
\def\rightmark{Af\-fine vector fields}

\begin{abstract}
It is known that a Kil\-ling field on a compact
pseu\-do\hs-\hn K\"ahler manifold is necessarily (real) 
hol\-o\-mor\-phic, as long as the manifold satisfies some relatively mild
additional conditions. We provide two further proofs of this fact and discuss 
the natural open question whether the same conclusion holds for
af\-fine -- rather than Kil\-ling -- vector fields. The question cannot be
settled by invoking the Kil\-ling case: Boubel and Mounoud
[Trans.\,Amer.\,Math.\,Soc.\,368, 2016, 2223--2262] 
constructed examples of non-Kil\-ling af\-fine vector fields on
compact pseu\-do\hs-Riem\-ann\-i\-an manifolds. We show that an af\-fine
vector field $\,v\,$ is necessarily symplectic, and establish some algebraic 
and differential properties of the Lie derivative of the metric along $\,v$, 
such as its being parallel, anti\-lin\-e\-ar and
nil\-po\-tent as an en\-do\-mor\-phism of the tangent bundle. As a
consequence, the answer to the above question turns out to be 
`yes' whenever the underlying manifold admits no nontrivial hol\-o\-mor\-phic
quadratic differentials, which includes the case of
compact almost homogeneous complex manifolds
with nonzero Euler characteristic.
\end{abstract}

\maketitle

\setcounter{section}{0}
\setcounter{theorem}{0}
\renewcommand{\theequation}{\arabic{section}.\arabic{equation}}
\renewcommand{\thetheorem}{\Alph{theorem}}
\section*{Introduction}
\setcounter{equation}{0}
A {\it pseu\-do\hs-\hn K\"ahler manifold\/} is a pseu\-do\hs-Riem\-ann\-i\-an
manifold endowed with a parallel al\-most-com\-plex structure $\,J$,
making the metric Her\-mit\-i\-an. This is well known 
to imply in\-te\-gra\-bi\-li\-ty of $\,J$, via the New\-land\-er-Ni\-ren\-berg
theorem: the Nijen\-huis tensor $\,N\hs$ sends 
vector fields $\,v,w\,$ to the vector field 
$\,N(v,w)=[v,w]+J[Jv,w]+J[v,Jw]-[Jv,Jw]$, so that
$\,N(v,w)=[J\nabla_{\nnh\!v}\w\nnh J
-\nabla_{\!\!J\hn v}\w J]\hh w+[\nabla_{\!\!J\hn w}\w J
-J\nabla_{\nh\!w}\w J]\hh v$ for any tor\-sion-free 
connection $\,\nabla\nnh$. Thus, $\,N\nnh=0\,$ whenever $\,\nabla\nh J=0$.

It is also well known \cite[pp.\,60--61]{ballmann} that Kil\-ling fields
on compact {\it Riemannian\/} K\"ahler manifolds are necessarily (real)
hol\-o\-mor\-phic, compactness being essential (as illustrated by flat 
manifolds). This remains valid, under some additional assumptions, in the
pseu\-do\hs-\hn K\"ahler case \cite{schouten-yano,derdzinski-terek}.

A vector field $\,v\,$ on a manifold endowed with a connection $\,\nabla\hs$
is said to be {\it af\-fine\/} if its local flow preserves $\,\nabla\nh$.  
When $\,\nabla\hs$ is the Le\-vi-Ci\-vi\-ta connection of a Riemannian metric,
such $\,v\,$ are usually Kil\-ling fields, with very few -- always
noncompact -- exceptions \cite[Ch.\,IV]{kobayashi}. 
However, Boubel and Mounoud \cite{boubel-mounoud}
provided examples of compact pseu\-do\hs-Riem\-ann\-i\-an manifolds 
admitting non-Kil\-ling af\-fine vector fields. Their examples are not of
the pseu\-do\hs-\hn K\"ahler type, which raises a question: are the 
af\-fine-to-Kil\-ling and af\-fine-to-hol\-o\-mor\-phic implications true for
compact pseu\-do\hs-\hn K\"ahler manifolds?

This question remains open. The present paper establishes some 
properties of the Lie derivative
$\,\pounds\hskip-1.5pt_v\w g\,$ for an af\-fine vector field $\,v\,$ 
on a compact pseu\-do\hs-\hn K\"ahler manifold $\,(M\nh,g)\,$ with the
$\,\ddb$ property (Theorem~\ref{fourc}): in addition to being obviously
parallel, $\,\pounds\hskip-1.5pt_v\w g\,$ is also anti\-lin\-e\-ar and 
nil\-po\-tent as an en\-do\-mor\-phism of $\,T\nh M\nh$, while
$\,v\,$ is a Kil\-ling field if and only if it is real hol\-o\-mor\-phic.
Also, $\,\pounds\hskip-1.5pt_v\w\hs\omega=0$ for the K\"ah\-ler form
$\,\omega\,$ (Corollary~\ref{sympl}).

Finally, as a partial answer to the
above question, we show that $\,v\,$ must be 
real hol\-o\-mor\-phic when $\,M\,$ admits no nontrivial
hol\-o\-mor\-phic quadratic differentials (Theorem~\ref{holtf}) and hence,
in particular, when $\,M\,$ is almost homogeneous and has
nonzero Euler characteristic 
(Corollary~\ref{lochg}). We also provide, in Theorem~\ref{holof}, 
a pseu\-\hbox{do\hs-}Riem\-ann\-i\-an analog of a Hodge decomposition for 
the $\,1$-form $\,g(v,\,\cdot\,)$, and observe that it coincides
with the Riemannian Hodge decomposition of $\,g(v,\,\cdot\,)$ relative to
any Riemannian K\"ah\-ler metric $\,h\,$ on $\,M\nh$, if such $\,h\,$ exists.
  
\renewcommand{\thetheorem}{\thesection.\arabic{theorem}}
\section{Preliminaries}\label{pr}
\setcounter{equation}{0}
Manifolds and mappings are assumed smooth, the former also connected. 

Let $\,(M\nh,g)\,$ be a pseu\-do\hs-Riem\-ann\-i\-an manifold. We write
$\,\beta\,\sim\,B\,$ when tensor fields $\,\beta\,$ and $\,B\,$ of types 
$\,(0,2)\,$ and $\,(1,1)\,$ are related by $\,\beta=g(B\hs\cdot\,,\,\cdot\,)$.
Thus, $\,g\,\sim\,\mathrm{Id}$. On a pseu\-do\hs-\hn K\"ahler manifold
\begin{equation}\label{osj}
\omega\,\sim\hs J\,\mathrm{\ for\ the\ K}\ddot{\mathrm{a}}\mathrm{h\-ler\ form\
}\,\,\omega\,\mathrm{\ and\ the\ com\-plex}\hyp\mathrm{struc\-ture\ tensor\
}\hs J\hh.
\end{equation}
If $\,\beta\,\sim\hs B$, as defined above, and we set
\begin{equation}\label{aen}
A\,=\,\nabla\nh v
\end{equation}
for a fixed vector field $\,v$, one easily sees that
\begin{equation}\label{lvb}
\pounds\hskip-1pt_v\w\hs\beta\,
\sim\,\nabla\hskip-2.3pt_v\w B\,+\,BA\,+\,A\nh^*\nnh B\hh,
\end{equation}
$A\nh^*$ being the pointwise $\,g$-ad\-joint of $\,A$. Two obvious special
cases are
\begin{equation}\label{lvg}
\mathrm{a)}\hskip7pt\pounds\hskip-1pt_v\w\hs g\,
\sim\,A\,+A\nh^*,\qquad\mathrm{b)}\hskip7pt\pounds\hskip-1.5pt_v\w\hs\omega\,
\sim\,J\nnh A\,+\nh A\nh^*\hskip-2.2ptJ,
\end{equation}
the latter in the pseu\-do\hs-\hn K\"ahler case. Assuming 
(\ref{aen}), we obviously 
get
\begin{equation}\label{dxi}
d\hh[g(v,\,\cdot\,)]\,\sim\,A\,-A\nh^*\nnh.
\end{equation}
If $\,\beta\,\sim\,B\,$ for a real differential $\,2$-form on a complex
manifold,
\begin{equation}\label{one}
\begin{array}{l}
\beta\,\mathrm{\ is\ a\ }\,(1,1)\hyp\mathrm{form\ if\ and\ only\ if\
}\,[J,\hn B]\hn=0\hh\mathrm{,\ as\ both}\\
\mathrm{conditions\ are\ clearly\ equivalent\ to\ 
}\,\,\beta(J\hs\cdot\,,J\hs\cdot\hs)\,=\beta\hh.
\end{array}
\end{equation}
Given a pseu\-do\hs-Riem\-ann\-i\-an manifold $\,(M\nh,g)$, using $\,J$,
\begin{equation}\label{tre}
\mathrm{we\ treat\ }\,T\nh M\,\mathrm{\ as\ a\ complex\ vector\ bundle.}
\end{equation}
Cartan's homotopy formula 
$\,\pounds\hskip-1pt_v\w=\imath_v\w\hn d+d\hs\imath_v\w$ for
$\,\pounds\hskip-1pt_v\w$ acting on differential forms
\cite[Thm.\, 14.35, p.\,372]{lee} and the Leib\-niz rule
$\,\pounds\hskip-1pt_v\w[\nabla\nh\varTheta]
=[\pounds\hskip-1pt_v\w\nabla]\hs\varTheta
+\nabla\nh[\pounds\hskip-1pt_v\w\varTheta]\,$ 
imply that, for any vector field $\,v\,$ on a manifold,
\begin{equation}\label{lvo}
\pounds\hskip-1.5pt_v\w\hs\omega\,=\,d\hh[\hh\omega(v,\,\cdot\,)]\,\mathrm{\ if\
}\,\omega\,\mathrm{\ is\ a\ closed\ differential\ form,}
\end{equation}
while, whenever $\,v\,$ happens to be af\-fine relative to a connection 
$\,\nabla\nh$,
\begin{equation}\label{lvt}
\nabla\nh[\pounds\hskip-1pt_v\w\varTheta]=0\,\mathrm{\ \ if\ 
}\,\varTheta\,\mathrm{\ is\ a\ tensor\ field\ with\ }\,\nabla\nh\varTheta=0\hh.
\end{equation}
\begin{remark}\label{cstrk}Any con\-stant-rank twice-co\-var\-i\-ant symmetric
tensor field $\,\beta\,$
on a manifold 
has the same algebraic type at
all points: its positive and negative indices, being lower
sem\-i\-con\-tin\-u\-ous, with a constant sum, must be locally constant.
\end{remark}
\begin{remark}\label{holom}Due to the Leib\-niz rule, for any vector field
$\,v\,$ on a pseu\-do\-K\"ah\-ler manifold, 
$\,\pounds\hskip-1pt_v\w J=[J,A]$, where $\,A=\nabla\nh v$, so that $\,v\,$ is
real hol\-o\-mor\-phic if and only if $\,\nabla\nh v\,$ commutes
with $\,J$. On the other hand, hol\-o\-mor\-phic com\-plex-val\-ued functions
$\,\phi\,$ on a complex manifold $\,M\,$ are characterized by the
Cau\-chy-Rie\-mann condition $\,(d\phi)J=i\hs d\phi$, where
$\,(d\phi)J\,$ denotes the composite bundle morphism 
$\,T\nh M\to T\nh M\to M\times\bbC$.
\end{remark}

\section{The $\,\ddb\,$ property}\label{dd} 
\setcounter{equation}{0}
Every compact
complex manifold admitting a Riemannian K\"ahler metric has the
following $\,\ddb$ {\it property}, also referred to as {\it the\/ $\,\ddb\hs$
lemma\/} \cite[Prop.\,6.17 on p.\,144]{voisin}: given integers
$\,p,q\ge0$, any closed $\,\pl\hs$-exact or
$\,\cro\hs$-exact $\,(p,q)$-form equals $\,\ddb\hh\lambda\,$ for some
$\,(p-1,q-1)$-form $\,\lambda$. Then, since the exactness of a $\,(p,0)$-form
amounts to its $\,\pl\hs$-exact\-ness, and implies its closedness,
\begin{equation}\label{exz}
M\,\mathrm{\ admits\ no\ nonzero\ exact\ }\,(p,0)\hyp\,\mathrm{\ or\ 
}\,(0,p)\hyp\mathrm{forms.}
\end{equation}
As a special case, on a compact complex $\,\ddb$ manifold $\,M\nh$,
\begin{equation}\label{spc}
\mathrm{every\ exact\ real\ }\,\,(1,1)\hyp\mathrm{form\ 
}\,\alpha\,\mathrm{\ equals\ }\,i\hh\ddb\hh\phi\,\mathrm{\ for\ some\
}\,\phi:M\to\bbR\hh.
\end{equation}
In fact, writing $\,\alpha=d\hs\xi=\pl\hs\xi+\,\cro\hs\xi$, with a real
$\,1$-form $\,\xi$, we see that 
$\,\pl\hs\xi$ and $\,\,\cro\hs\xi$ are both closed: 
$\,d\hs\pl\hs\xi=\,\cro\hs\pl\hs\xi=\,\cro\hs d\hs\xi=\,\cro\alpha=0
=\pl\alpha=\,\ddb\hs\xi=d\,\hs\cro\hs\xi$, since $\,d=\pl\hs+\hs\cro$, while
$\,\pl^2\nnh=\,\cro{}^2\nh=0\,$ and $\,\alpha$, being closed, has
$\,\pl\alpha=\,\cro\alpha=0$. Also, as $\,\alpha\,$ is a $\,(1,1)$-form,
so must be $\,\pl\hs\xi$ and $\,\,\cro\hs\xi$. In view of the $\,\ddb$
property, the $\,(1,1)$-forms $\,\pl\hs\xi$ and $\,\,\cro\hs\xi$, 
being closed and $\,\pl\hs$-exact or $\,\cro\hs$-exact, equal
$\,\ddb\hh\lambda\,$ and $\,\ddb\hh\my\,$ for some functions
$\,\lambda,\my$, so that $\,\alpha=\mathrm{Re}\,\alpha=i\hh\ddb\hh\phi$,
where $\,\phi=\mathrm{Im}\,(\lambda+\my)$. 

On the other hand, whenever $\,\phi:M\to\bbR$,
\begin{equation}\label{twi}
2i\hh\ddb\hh\phi\,=\,-d[(d\phi)J]\hh,
\end{equation}
$(d\phi)J\,$ being the composite bundle morphism
$\,T\nh M\to T\nh M\to M\times\bbR$. Consequently,
\begin{equation}\label{idd}
-\nh2\alpha(J\hs\cdot\,,\,\cdot\,)\,=\,\theta(J\hs\cdot\,,J\hs\cdot\,)\,
+\,\theta(\,\cdot\,,\,\cdot\,)\,\,\mathrm{\ if\ }\,\alpha
=i\hh\ddb\hh\phi\,\mathrm{\  and\ }\,\theta=\nabla\nh d\phi\hh.
\end{equation}
where $\,\nabla$ is any tor\-sion-free connection on $\,M\,$ with 
$\,\nabla\nnh J=0$.
Whether or not such $\,\nabla\hs$ exists, at any critical point $\,z\,$ of a
function $\,\phi:M\to\bbR$, by (\ref{twi}),
\begin{equation}\label{crt}
-\nh2\alpha(J\hs\cdot\,,\,\cdot\,)\,=\,\theta(J\hs\cdot\,,J\hs\cdot\,)\,
+\,\theta(\,\cdot\,,\,\cdot\,)\,\,\mathrm{\ if\ }\,\alpha
=i\hh\ddb\hh\phi\,\mathrm{\  and\ }\,\theta=\mathrm{Hess}\hn_z\w \phi\hh.
\end{equation}
\begin{lemma}\label{expzr}A compact complex manifold\/ $\,M\,$ satisfying the
special case\/ {\rm(\ref{spc})} of the\/ $\,\ddb\hs$ condition admits no 
nonzero con\-stant-rank real-val\-ued exact\/ $\,(1,1)$-forms.
\end{lemma}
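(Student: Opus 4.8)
The plan is to use the special case (\ref{spc}) to produce a potential function and then run a maximum-principle argument at its extrema. Suppose $\alpha$ is a constant-rank real-valued exact $(1,1)$-form on $M$. By (\ref{spc}) there is a function $\phi:M\to\bbR$ with $\alpha=i\ddb\phi$. If $\phi$ is constant then $\alpha=0$ and we are done, so assume $\phi$ is nonconstant. I would form the twice-covariant symmetric tensor field $h=\alpha(J\,\cdot\,,\,\cdot\,)$, whose symmetry is immediate from (\ref{one}) because $\alpha$ is a $(1,1)$-form, and whose pointwise rank equals that of $\alpha$ since $J$ is invertible. Thus $h$ has constant rank $r$, and Remark~\ref{cstrk} shows that its positive and negative indices $p,q$ (with $p+q=r$) are the same at every point of $M$.

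First I would evaluate $h$ at a maximum point $z_+$ of $\phi$, which exists by compactness. There $\theta=\mathrm{Hess}_{z_+}\phi$ is negative semidefinite, so $\theta(J\,\cdot\,,J\,\cdot\,)$ and $\theta(\,\cdot\,,\,\cdot\,)$ are both $\le0$; hence (\ref{crt}) gives $-2h=\theta(J\,\cdot\,,J\,\cdot\,)+\theta(\,\cdot\,,\,\cdot\,)\le0$, that is, $h\ge0$ at $z_+$. A positive semidefinite symmetric form of rank $r$ has indices $(r,0)$, so $p=r$ and $q=0$. Next I would repeat this at a minimum point $z_-$: now $\theta\ge0$, whence $h\le0$ at $z_-$, so its indices are $(0,r)$ and $p=0$, $q=r$.

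Comparing the two conclusions with the constancy supplied by Remark~\ref{cstrk} forces $p=r$ and $p=0$ simultaneously, hence $r=0$ and $\alpha=0$, contradicting the assumption that $\alpha$ is nonzero. The one step that requires genuine care -- and the conceptual core of the argument -- is the sign analysis in (\ref{crt}): one must verify that $h=\alpha(J\,\cdot\,,\,\cdot\,)$ is precisely the symmetric form occurring there and confirm that its definiteness reverses between a maximum and a minimum of $\phi$, so that constant rank together with constant signature become incompatible unless $\alpha$ vanishes.
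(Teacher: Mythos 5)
Your proposal is correct and follows essentially the same route as the paper's own proof: produce $\phi$ via (\ref{spc}), apply (\ref{crt}) at a maximum and a minimum of $\phi$ to get opposite semidefiniteness of $\alpha(J\,\cdot\,,\,\cdot\,)$, and then invoke Remark~\ref{cstrk} on constancy of the signature to force the rank to vanish. The only differences are cosmetic -- you make the index count $(p,q)$ and the final contradiction explicit, while the paper states the conclusion directly as simultaneous positive and negative semidefiniteness everywhere.
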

\begin{proof}Applying (\ref{spc}) and (\ref{crt}) to a con\-stant-rank 
real-val\-ued exact $\,(1,1)$-form $\,\alpha$, at a point $\,z\in M\,$ where 
$\,\phi\,$ assumes its maximum (or, minimum) value, we see that the symmetric
$\,2$-ten\-sor field $\,\alpha(J\hs\cdot\,,\,\cdot\,)\,$ is positive (or,
negative) sem\-i\-def\-i\-nite at $\,z$. Remark~\ref{cstrk} applied to
$\,\beta=\alpha(J\hs\cdot\,,\,\cdot\,)$, which has constant rank (equal to
the rank of $\,\alpha$), implies both positive and 
negative sem\-i\-def\-i\-nite\-ness of $\,\beta\,$ at all points, so that
$\,\beta=0\,$ everywhere.
\end{proof}

\section{Consequences of the Hodge decomposition}\label{ch} 
\setcounter{equation}{0}
For each cohomology space $\,H^p\nnh(M,\bbC)\,$ of a compact complex manifold
$\,M\,$ with the $\,\ddb$ property, denoting by $\,H^{r,s}\nnh M\,$ the space 
of cohomology classes of closed com\-plex-val\-ued $\,(r,s)$-forms, one has
the Hodge decomposition 
\begin{equation}\label{hdc}
H^p\nnh(M,\bbC)=H^{p,0}\nnh M\oplus H^{p-1,1}\nnh M\oplus\ldots
\oplus H^{1,p-1}\nnh M\oplus H^{0,p}\nnh M\hh.
\end{equation}
See, e.g.,
\cite[p.\,296, subsect.\,(5.21)]{deligne-griffiths-morgan-sullivan}. 
\begin{lemma}\label{expar}Let\/ $\,\zeta\hh$ be an exact\/ 
$\,\nabla\nh$-par\-al\-lel com\-plex-val\-ued\/ $\,p$-\hn form
on a compact complex\/ $\,\ddb$ manifold\/ $\,M\,$ with a 
tor\-sion-free connection\/ $\,\nabla$ such that\/ $\,\nabla\nnh J=0$.
\begin{enumerate}
\item[(i)]$\zeta\hh$ has zero\/ $\,(p,0)\,$ and\/ $\,(0,p)$ components.
\item[(ii)]The\/ $\,(r,s)\,$ components of\/ $\,\zeta$, $\,r+s=p$, are all
exact and\/ $\,\nabla\nh$-par\-al\-lel.
\item[(iii)]$\zeta=0\,$ when\/ $\,p=2$.
\end{enumerate}
\end{lemma}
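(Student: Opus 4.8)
The plan is to prove the three assertions in the order (ii)$\to$(i)$\to$(iii), exploiting three features in turn: the compatibility of $\,\nabla\,$ with the type decomposition (coming from $\,\nabla\nnh J=0$), the fact that $\,\nabla$-par\-al\-lel forms are closed (coming from torsion-freeness), and finally the Hodge decomposition (\ref{hdc}) together with the vanishing results (\ref{exz}) and Lemma~\ref{expzr}. The only genuinely global input is the $\,\ddb\,$ property, which is already packaged into those three cited facts, so the proof amounts to combining them correctly.

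First I would note that, since $\,\nabla\nnh J=0$, the connection induced on $\,\Lambda^p(T^*\nh M)\otimes\bbC\,$ commutes with the operator induced by $\,J$, hence with the projections onto the summands $\,\Lambda^{r,s}$. Consequently each type component $\,\zeta^{r,s}\,$ of the $\,\nabla$-par\-al\-lel form $\,\zeta\,$ is itself $\,\nabla$-par\-al\-lel, which already gives the parallelism half of (ii). Next, because $\,\nabla\,$ is torsion-free, $\,d\,$ is the alternation of $\,\nabla$, so every $\,\nabla$-par\-al\-lel form is closed; in particular each $\,\zeta^{r,s}\,$ is closed and therefore determines a class $\,[\zeta^{r,s}]\in H^{r,s}M$.

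For the exactness half of (ii) I would use that $\,\zeta\,$ is exact, i.e.\ $\,[\zeta]=0\,$ in $\,H^p(M,\bbC)$. Since the $\,\zeta^{r,s}\,$ are the type components of $\,\zeta\,$ and are each closed, $\,[\zeta]=\sum_{r+s=p}[\zeta^{r,s}]\,$ is precisely the Hodge decomposition (\ref{hdc}) of $\,[\zeta]$; as the sum is direct, $\,[\zeta]=0\,$ forces every $\,[\zeta^{r,s}]=0$, so each $\,\zeta^{r,s}\,$ is exact. Part (i) then follows immediately, because $\,\zeta^{p,0}\,$ and $\,\zeta^{0,p}\,$ are exact forms of pure type $\,(p,0)\,$ and $\,(0,p)$, which (\ref{exz}) makes vanish.

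Finally, for (iii) with $\,p=2$, part (i) reduces $\,\zeta\,$ to its component $\,\zeta^{1,1}$, which by (ii) is exact and $\,\nabla$-par\-al\-lel. Writing $\,\zeta^{1,1}=\alpha+i\beta\,$ with $\,\alpha,\beta\,$ real, the criterion $\,\beta(J\hs\cdot\,,J\hs\cdot\hs)=\beta\,$ of (\ref{one}) shows $\,\alpha\,$ and $\,\beta\,$ are again real $\,(1,1)$-forms, each exact and $\,\nabla$-par\-al\-lel. Since a $\,\nabla$-par\-al\-lel tensor field has constant rank (parallel transport being a linear isomorphism), Lemma~\ref{expzr} applies and yields $\,\alpha=\beta=0$, whence $\,\zeta=0$. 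The step I expect to require the most care is identifying $\,[\zeta^{r,s}]\,$ as the $\,(r,s)$-part of $\,[\zeta]\,$ in (\ref{hdc}) --- that is, checking that the separate closedness of the components (available here thanks to parallelism) legitimately turns the type splitting of the form into the Hodge splitting of its class; the remaining steps are formal consequences of $\,\nabla\nnh J=0$, torsion-freeness, and the cited vanishing lemmas.
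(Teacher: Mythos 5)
Your proposal is correct and follows essentially the same route as the paper: parallelism of the type components via $\,\nabla\nnh J=0$, closedness from torsion-freeness, directness of the Hodge decomposition (\ref{hdc}) to get exactness of each component, (\ref{exz}) for (i), and Lemma~\ref{expzr} for (iii). Your only addition is to spell out the step the paper leaves as ``easily yield (iii)'' --- splitting $\,\zeta^{1,1}\,$ into real and imaginary parts, each exact, parallel (hence of constant rank), and of type $\,(1,1)\,$ by (\ref{one}) --- which is exactly the intended argument.
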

\begin{proof}
The decomposition of the bundle of com\-plex-val\-ued exterior $\,p\hs$-forms
on $\,M\,$ into its $\,(r,s)\,$ summands, with $\,r+s=p$, is invariant under
parallel transports, since $\,J\,$ uniquely determines the decomposition and
$\,\nabla\nnh J=0$. The components $\,\zeta\hh^{r,s}$ of the decomposition
of $\,\zeta\,$ are thus all $\,\nabla\nh$-par\-al\-lel, and hence closed.
The resulting cohomology relation
$\,\sum_{\,r,s}\w[\zeta\hh^{r,s}]=[\zeta]=0\,$ gives, by (\ref{hdc}),
$\,[\zeta\hh^{r,s}]=0$ whenever $\,r+s=p$, proving (ii), while (i) follows
from (ii) and (\ref{exz}). Lemma~\ref{expzr} and (i) -- (ii) now easily yield 
(iii). 
\end{proof}
We have an obvious consequence of (\ref{lvo}) -- (\ref{lvt}) and 
Lemma~\ref{expar}(iii).
\begin{corollary}\label{sympl}Let\/ $\,v\,$ be an af\-fine vector field on a
compact pseu\-do\hs-\hn K\"ahler\/ $\,\ddb$  manifold\/ $\,(M\nh,g)\,$ with 
the K\"ah\-ler form\/ $\,\omega=g(J\hs\cdot\,,\,\cdot\,)$. Then\/ 
$\,\pounds\hskip-1.5pt_v\w\hs\omega=0$.
\end{corollary}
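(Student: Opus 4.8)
The plan is to exhibit $\pounds_v\omega$ as a two-form that is simultaneously exact and $\nabla$-parallel, and then to eliminate it using Lemma~\ref{expar}(iii). First I would record that the K\"ahler form is itself parallel: from (\ref{osj}) we have $\omega\sim J$, and since the Levi-Civita connection $\nabla$ of a pseudo-K\"ahler metric satisfies $\nabla g=0$ and $\nabla J=0$, it follows that $\nabla\omega=0$; in particular $\omega$ is closed.

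Exactness then comes essentially for free. Applying (\ref{lvo}) to the closed form $\omega$ gives $\pounds_v\omega=d[\omega(v,\,\cdot\,)]$, so $\pounds_v\omega$ is exact. For parallelism I would invoke (\ref{lvt}): since $v$ is affine relative to $\nabla$ and $\omega$ is a tensor field with $\nabla\omega=0$, we obtain $\nabla[\pounds_v\omega]=0$. Thus $\pounds_v\omega$ is an exact, $\nabla$-parallel, real two-form.

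At this point Lemma~\ref{expar}(iii) applies essentially verbatim, taken with $\zeta=\pounds_v\omega$, $p=2$, and $\nabla$ the Levi-Civita connection, which is torsion-free and satisfies $\nabla J=0$ as the lemma requires; it forces $\pounds_v\omega=0$. The only minor points needing a word are that a real form is in particular a complex-valued form, so that the complex-valued hypothesis of the lemma is met, and that the compact pseudo-K\"ahler $\ddb$ hypothesis on $(M,g)$ supplies precisely the standing assumptions of Lemma~\ref{expar}.

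I do not anticipate any genuine obstacle in the corollary itself: as the sentence preceding its statement indicates, it is a direct assembly of (\ref{lvo}), (\ref{lvt}) and Lemma~\ref{expar}(iii). The one slightly less mechanical ingredient is the vanishing statement Lemma~\ref{expar}(iii), but that is already established upstream—resting on the Hodge decomposition (\ref{hdc}) together with Lemma~\ref{expzr}—so here it may simply be cited.
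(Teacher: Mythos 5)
Your proposal is correct and is essentially identical to the paper's own (one-sentence) proof: the paper also obtains $\pounds\hskip-1pt_v\w\hs\omega$ as exact via (\ref{lvo}), parallel via (\ref{lvt}), and then kills it with the vanishing statement for exact parallel $2$-forms, i.e.\ Lemma~\ref{expar}(iii) (the paper's citation of ``Lemma~\ref{expzr}(iii)'' is evidently a typo for Lemma~\ref{expar}(iii), since Lemma~\ref{expzr} has no itemized parts). Your extra remarks -- that $\omega$ is parallel hence closed, and that the Levi-Civita connection meets the torsion-free, $J$-parallel hypotheses of the lemma -- just make explicit what the paper treats as obvious.
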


\section{The case of Kil\-ling fields}\label{ck} 
\setcounter{equation}{0}
The paper \cite{derdzinski-terek} provides two different proofs of the
fact that, {\it on a compact pseu\-do-K\"ah\-ler\/ $\,\ddb$ manifold, all 
Kil\-ling fields are real hol\-o\-mor\-phic.}

Our preceding discussion gives rise to two more simple proofs of this fact.
As
\begin{equation}\label{lgj}
\pounds\hskip-1pt_v\w\hn[\hs g(\hn J\hs\cdot\,,\,\cdot\,)]\,
=\,g(\pounds\hskip-1.5pt_v\w J\hs\cdot\,,\,\cdot\,)\,\mathrm{\ when\
}\,\pounds\hskip-1.5pt_v\w g=0\hh,
\end{equation}
and $\,\omega=g(J\hs\cdot\,,\,\cdot\,)$, one new
proof comes directly from Corollary~\ref{sympl}.

For the other new proof, note that $\,\zeta=\pounds\hskip-1.5pt_v\w\hs\omega$,
being
exact and parallel by (\ref{lvo}) and (\ref{lvt}), must -- due to
Lemma~\ref{expar}(i) -- be a $\,(1,1)$-form. Since (\ref{lgj}) gives 
$\,\pounds\hskip-1.5pt_v\w\hs\omega\,\sim\hs\pounds\hskip-1.5pt_v\w J$,
(\ref{one}) implies that $\,J\,$ and $\,C=\pounds\hskip-1.5pt_v\w J\,$
commute. However, they also anti\-com\-mute: 
$\,0=-\pounds\hskip-1pt_v\w\hs\mathrm{Id}
=\pounds\hskip-1.5pt_v\w J\hs^2\nh=CJ+J\nh C$. Consequently,
$\,\pounds\hskip-1.5pt_v\w J=C=0$.

The first proof in \cite{derdzinski-terek}, at the end of Sect.\,3, procceds
as follows. By (\ref{lvt}) above,
$\,\pounds\hskip-1.5pt_v\w J\,$ is parallel, and 
hence so is the $\,(2,0)$-form 
$\,\zeta=g(\hn\pounds\hskip-1.5pt_v\w J\hs\cdot\,,\,\cdot\,)
-ig((\hn\pounds\hskip-1.5pt_v\w J)J\hs\cdot\,,\,\cdot\,)$, which makes
$\,\zeta\,$ closed as well. However, $\,\zeta\,$ is also 
$\,\pl\hs$-exact, namely -- see \cite[Lemma 3.4]{derdzinski-terek} -- 
equal to $\,\pl\hs[\hs g(Jv,\,\cdot\,)-ig(v,\,\cdot\,)]$. Combined with its
closedness and the $\,\ddb$ property, this, 
according to \cite[Lemma 3.1]{derdzinski-terek}, gives $\,\zeta=0$.

The second proof in \cite{derdzinski-terek}, in Sect.\,4, uses the
parallel $\,(2,0)$-form $\,\zeta\,$ of the last paragraph. By (\ref{lvo}), 
$\,\mathrm{Re}\,\zeta=\pounds\hskip-1.5pt_v\w\hs\omega\,$ is exact.
This makes $\,[i\zeta]\in H^{2,0}\nnh M\,$ a real cohomology class, equal
to its conjugate lying in $\,H^{0,2}\nnh M\nh$, and so $\,\zeta=0$, as
$\,H^{2,0}\nnh M\nh$, $\,H^{0,2}\nnh M\,$ and $\,H^{1,1}\nnh M\,$ are
direct summands of $\,H^2\nnh(M,\bbC)$,

\section{The four components of the covariant derivative}\label{mo} 
\setcounter{equation}{0}
On a pseu\-do\hs-\hn K\"ahler manifold $\,(M\nh,g)$, the operation
$\,A\mapsto J\nh AJ\,$ applied to bundle morphisms $\,A:T\nh M\to T\nh M\,$
obviously commutes with $\,A\mapsto A\nh^*$ and, as both are involutions,
every $\,A\,$ is decomposed into four components (com\-plex-lin\-e\-ar
self-ad\-joint, com\-plex-lin\-e\-ar skew-ad\-joint, anti\-lin\-e\-ar 
self-ad\-joint, anti\-lin\-e\-ar skew-ad\-joint). In the case where
$\,A=\nabla\nh v\,$ for an af\-fine vector field $\,v\,$ on a 
compact pseu\-do\-K\"ahler $\,\ddb$ manifold $\,(M\nh,g)$, two of the 
four components -- the first and last ones -- are absent, according to the
assertions (\ref{csq}-b) and (\ref{csq}-c) below, while the third one has 
rather special algebraic and differential properties, cf. (\ref{csq}-d).
\begin{theorem}\label{fourc}
Let\/ $\,v\,$ be an af\-fine vector field on a 
compact pseu\-do\hs-\hn K\"ahler\/ $\,\ddb$ manifold\/ $\,(M\nh,g)$. Then,
for\/ $\,A=\nabla\nh v$,
\begin{equation}\label{csq}
\begin{array}{rl}
\mathrm{a)}&A\nh^*\hs=\,J\nh AJ\hh,\\
\mathrm{b)}&A\,-\hs A\nh^*\,\mathrm{\ commutes\ with\ }\,J\hh,\\
\mathrm{c)}&A\,+\hs A\nh^*\,\mathrm{\ anti\-com\-mutes\ with\ }\,J\hh,\\
\mathrm{d)}&A\,+\hs A\nh^*\,\mathrm{\ is\ parallel,\ 
and\ nil\-po\-tent\ at\ every\ point,}\\
\mathrm{e)}&\mathrm{div}\hskip1.7ptv\,=\,0\hh,\\
\mathrm{f)}&v\,\mathrm{\ is\ a\ Kil\-ling\ field\ if\ and\ only\ if\ it\ is\
real}\hyp\mathrm{hol\-o\-mor\-phic.}
\end{array}
\end{equation}
\end{theorem}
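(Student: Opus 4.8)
The plan is to read off (a) from the vanishing of $\,\pounds_v\omega\,$ established in Corollary~\ref{sympl}, to deduce (b), (c), (e), (f) and the \emph{parallel} half of (d) by formal operator algebra, and to isolate the nilpotency in (d) as the one genuinely global statement. Writing $\,S=A+A^*$, I would first combine Corollary~\ref{sympl} with (\ref{lvg}-b): since $\,\pounds_v\omega=0\,$ while $\,\pounds_v\omega\sim JA+A^*J$, one has $\,JA+A^*J=0$, and right-multiplication by $\,J\,$ (using $\,J^2=-\mathrm{Id}$) turns this into
\[
A^*=JAJ,
\]
which is (a).

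Assertions (b) and (c) are then pure algebra. From (a) one also gets $\,JA^*J=J(JAJ)J=A$, whence
\[
J(A-A^*)J=A^*-A=-(A-A^*),\qquad J(A+A^*)J=A^*+A=A+A^*;
\]
since $\,JBJ=-B\,$ is equivalent to $\,[J,B]=0\,$ and $\,JBJ=B\,$ to $\,JB+BJ=0$, the first identity yields (b) and the second yields (c). For the parallel half of (d), (\ref{lvg}-a) gives $\,\pounds_v g\sim S$, while (\ref{lvt}) applied to the parallel tensor $\,g\,$ gives $\,\nabla[\pounds_v g]=0$; as $\,\beta\sim B\,$ is induced by the parallel metric, $\,S\,$ is parallel. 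Part (e) is immediate from Corollary~\ref{sympl}: a field preserving $\,\omega\,$ preserves its top power $\,\omega^n$, hence the metric volume form, so $\,\mathrm{div}\,v=0$. Finally, for (f), (b) and (c) give $\,[J,A]=\frac12[J,A-A^*]+\frac12[J,A+A^*]=\frac12[J,S]=JS$, so by Remark~\ref{holom} $\,v\,$ is real-holomorphic iff $\,JS=0$, i.e. iff $\,S=0$, i.e. iff $\,\pounds_v g=0$, i.e. iff $\,v\,$ is Killing.

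The nilpotency in (d) is the main obstacle, and the only step for which compactness is essential. Here $\,S\,$ is parallel, $\,g$-self-adjoint and anticommutes with $\,J$, so $\,S^2\,$ is parallel, self-adjoint and commutes with $\,J$; by (\ref{one}) the real $\,2$-form $\,\eta\sim JS^2\,$ is thus a parallel — hence closed and constant-rank — $\,(1,1)$-form. One cannot simply declare $\,\eta\,$ exact: using (\ref{lvb}) and (a) one finds only that $\,\pounds_v\eta=d\,\iota_v\eta\sim J[S^2,A]\,$ is an exact parallel $\,(1,1)$-form, so by Lemma~\ref{expar}(iii) (or Lemma~\ref{expzr}) it vanishes, giving the auxiliary relation $\,[S^2,A]=0\,$ but not nilpotency (indeed $\,\eta\,$ itself need not be exact when $\,S^2\neq0$).

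The decisive input is the affine flow $\,\psi_t=\exp(tv)$, which preserves $\,\nabla$: for each $\,t\,$ the pullback $\,g_t=\psi_t^*g\sim B(t)\,$ is a parallel, $\,g$-self-adjoint field with $\,B(0)=\mathrm{Id}\,$ and $\,B'(0)=S$, and because $\,\psi_t\,$ preserves $\,\omega\,$ (Corollary~\ref{sympl}) and the volume ((e)) it satisfies $\,B(t)JB(t)=J\,$ and $\,\det B(t)=1\,$ for all $\,t$. The first relation makes $\,B(t)\,$ conjugate to $\,B(t)^{-1}$, so its eigenvalues are invariant under $\,\lambda\mapsto\lambda^{-1}$, while self-adjointness pairs them under $\,\lambda\mapsto\bar\lambda$. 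I would finish by showing that on the \emph{compact} $\,M\,$ the curve $\,t\mapsto B(t)\,$ of parallel metrics can admit no eigenvalue leaving $\,1$ — equivalently, that an affine flow on a compact manifold distorts parallel frames only polynomially, which excludes any exponentially growing or genuinely rotating block of $\,B(t)$ — so that $\,S=B'(0)\,$ has $\,0\,$ as its only eigenvalue, i.e. is nilpotent. This dynamical control of $\,\psi_t\,$ is where the real difficulty lies; every other assertion is formal.
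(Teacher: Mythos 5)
Your treatment of (\ref{csq}-a)--(\ref{csq}-c), of the parallelness in (\ref{csq}-d), and of (\ref{csq}-e) and (\ref{csq}-f) is correct. Parts (a)--(c) and the parallel half of (d) follow the paper's own route (Corollary~\ref{sympl} plus (\ref{lvg}) and (\ref{lvt})); your proof of (e) via $\,\pounds\hskip-1pt_v\omega=0\Rightarrow\pounds\hskip-1pt_v\omega^n=0\,$ is a valid alternative to the paper's (which instead notes that $\,\mathrm{tr}\,(A+A^*)\,$ is constant by parallelness and has zero integral); and (f), which the paper leaves implicit, you spell out correctly. The genuine gap is nilpotency, which you never prove: your final paragraph reduces it to the claim that an affine flow on a compact manifold ``distorts parallel frames only polynomially,'' and that claim is left entirely unestablished --- as you admit, it is ``where the real difficulty lies.'' It is not a routine missing detail: it is at least as strong as the statement to be proved. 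Moreover, the reduction itself is defective. The family $\,B(t)\,$ defined by $\,\psi_t^*g=g(B(t)\,\cdot,\cdot)\,$ is \emph{not} a one-parameter group (computing $\,\psi_{t+s}^*g\,$ inserts $\,d\psi_s\,$ between the factors, so $\,B(t+s)\ne B(t)B(s)$), hence growth properties of the curve $\,t\mapsto B(t)\,$ do not translate into spectral information about $\,S=B'(0)\,$ in the way you suggest. And even for an honest one-parameter group $\,e^{tS}$, polynomial --- indeed bounded --- growth is compatible with purely imaginary eigenvalues of $\,S\,$ (rotating blocks are bounded); since $\,S\,$ is self-adjoint with respect to an \emph{indefinite} metric, purely imaginary eigenvalues cannot be excluded a priori, so your program would not yield $\,S\,$ nilpotent even if the polynomial-distortion claim were granted.

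The paper's proof of nilpotency is a short trace-and-integration argument available with exactly the tools you already had in hand. Since $\,S=A+A^*\,$ is parallel, each function $\,\mathrm{tr}\,S^k$ is constant on $\,M$. Self-adjointness of $\,C=S^{k-1}$ gives $\,\mathrm{tr}\,CA^*=\mathrm{tr}\,(CA^*)^*=\mathrm{tr}\,AC=\mathrm{tr}\,CA$, so $\,\mathrm{tr}\,S^k=2\,\mathrm{tr}\,CA$; and because $\,C\,$ is parallel, $\,\mathrm{tr}\,CA=C^i_{\ j}v^j{}_{,i}=\mathrm{div}\,(Cv)\,$ pointwise, whose integral over the compact $\,M\,$ vanishes. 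A zero-integral constant is zero, so $\,\mathrm{tr}\,S^k=0\,$ for every $\,k\ge1$, and vanishing of all power traces forces every eigenvalue of $\,S\,$ (after complexification) to vanish, i.e.\ $\,S\,$ is nilpotent at every point. Note that this argument uses neither $\,J\,$ nor the $\,\ddb\,$ property --- only parallelness of $\,\pounds\hskip-1pt_v g\,$ and compactness --- whereas your dynamical scheme, besides being incomplete, brings in the full flow $\,\psi_t\,$ only to confront a problem (controlling affine dynamics on a compact manifold) that the integration by parts circumvents entirely.
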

\begin{proof}Corollary~\ref{sympl} and (\ref{lvg}.b) yield (\ref{csq}-a), while
(\ref{csq}-a) trivially implies \hbox{(\ref{csq}-b)} -- (\ref{csq}-c). Next,
(\ref{lvt}) and (\ref{lvg}.b) prove the first part of (\ref{csq}-d). Thus, 
$\,2\,\mathrm{div}\hskip1.7ptv=2\,\mathrm{tr}\hskip1.7ptA
=\mathrm{tr}\hskip2pt(\nh A\hs+\hn A\nh^*)\,$ is constant, and has zero integral,
which gives (\ref{csq}.e) and the equality 
$\,\mathrm{tr}\hskip2pt(\nh A\hs+\hn A\nh^*)^k\nh=0\,$ for $\,k=1$. The same
equality for $\,k\ge2\,$ now follows: setting
$\,C=(A\hs+\hn A\nh^*)^{k-1}\nh$, we get
$\,\mathrm{tr}\hskip2pt(\nh A\hs+\hn A\nh^*)^k\nh=
\mathrm{tr}\hskip2.7pt(C\nnh A\hs+\hn C\nnh A\nh^*)
=2\,\mathrm{tr}\hskip2.7ptC\nnh A$.
(Note that $\,\mathrm{tr}\hskip2.7ptC\nnh A\nh^*\nh
=\mathrm{tr}\hskip2.7pt(C\nnh A\nh^*)^*\nh
=\mathrm{tr}\hskip1.7ptAC=\mathrm{tr}\hskip2.7ptC\nnh A$.)
Due to the first part of \hbox{(\ref{csq}-d)}, $\,C\,$ is parallel and
$\,\mathrm{tr}\hskip2.7ptC\nnh A\,$ constant. As the constant 
$\,\mathrm{tr}\hskip2.7ptC\nnh A\,$ equals
$\,C^{\hs i}_{\nh k}v^{\hh k}{}\nnh_{,i}\w
=(C^{\hs i}_{\nh k}v^{\hh k})\nnh_{,i}\w=\hs\mathrm{div}\hskip1.7ptCv$, it 
has zero integral. The zero-in\-te\-gral constant 
$\,\mathrm{tr}\hskip2pt(\nh A\hs+\hn A\nh^*)^k$ thus equals $\,0$, which 
which yields the induction step, proving the remainder of (\ref{csq}-d).
Finally, (\ref{csq}-f) follows from
Corollary~\ref{sympl}: since $\,\pounds\hskip-1.5pt_v\w\hs\omega=0\,$ and 
$\,\omega=g(J\hs\cdot\,,\,\cdot\,)$, the condition
$\,\pounds\hskip-1.5pt_v\w g=0\,$ is equivalent to
$\,\pounds\hskip-1.5pt_v\w J=0$.
\end{proof}

\section{Two hol\-o\-mor\-phic covariant tensors}\label{th} 
\setcounter{equation}{0}
Equation (\ref{gvd}) in Theorem~\ref{holof} constitutes both a
pseu\-\hbox{do\hs-}Riem\-ann\-i\-an analog of a
har\-mon\-ic-co\-ex\-act Hodge decomposition for 
the $\,1$-form $\,g(v,\,\cdot\,)$, and the actual Riemannian Hodge 
decomposition of $\,g(v,\,\cdot\,)\,$ relative to any Riemannian K\"ah\-ler 
metric $\,h$, as long as one exists on $\,M\nh$. This follows since 
$\,(d\phi)J\,$ and $\,\xi\hh$ appearing in (\ref{gvd}) are, respectively,
$\,h$-co\-ex\-act and $\,h$-har\-mon\-ic, with the exact part absent, for 
every pseu\-do\hs-Riem\-ann\-i\-an K\"ah\-ler metric $\,h\,$ which either 
equals our $\,g$, or is positive definite (and, in the case of $\,g$, by
$\,g${\it-har\-mon\-ic\/} one means closed and $\,g$-coclos\-ed). In fact,
$\,(d\phi)J\,$ is the 
$\,h$-di\-ver\-gence of $\,-\phi\hh h(J\hs\cdot\,,\,\cdot\,)$, for the
$\,h$-K\"ah\-ler form $\,h(J\hs\cdot\,,\,\cdot\,)$. Furthermore, we prove
below that both $\,\xi\hs$ and $\,\xi\hn J\,$ are closed, 
while, given any real $\,1$-form $\,\xi\hs$ on a complex manifold,
\begin{equation}\label{ofh}
\xi-i\hh\xi\hn J\,\mathrm{\ is\ hol\-o\-mor\-phic\ whenever\ 
}\,d\hs\xi=d(\xi J)=0\hh,
\end{equation}
since, in general, for a $\,(p,0)$-form $\,\zeta$, closedness ($d\hs\zeta\nh
=0$) obviously yields hol\-o\-mor\-phic\-i\-ty ($\hs\hs\cro\hs\zeta\nh=0$).
Being hol\-o\-mor\-phic, $\,\xi-i\hh\xi\hn J\,$ must be $\,h$-har\-mon\-ic
\cite[Ch.\,5]{ballmann} when $\,h\,$ is Riem\-ann\-i\-an. Finally, 
(\ref{gvd}) and (\ref{csq}-e) imply $\,g$-coclos\-ed\-ness of $\,\xi$.
\begin{theorem}\label{holof}Given an af\-fine vector field\/ $\,v\,$  on a 
compact pseu\-do\hs-\hn K\"ahler\/ $\,\ddb$  manifold\/ $\,(M\nh,g)$, one has
\begin{equation}\label{gvd}
g(v,\,\cdot\,)\,=\,(d\phi)J\hs+\,\xi
\end{equation}
for some\/ $\,\phi:M\to\bbR\,$ and the real part\/ $\,\xi$ of a 
hol\-o\-mor\-phic\ $\,1$-form\/ $\,\xi-i\hh\xi J$.
\end{theorem}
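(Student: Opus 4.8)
The plan is to build the splitting by first extracting the function $\,\phi\,$ from the $\,\ddb$ lemma and then verifying that what remains is the real part of a holomorphic form. First I would examine the exact real $\,2$-form $\,d[g(v,\,\cdot\,)]$: by (\ref{dxi}) it satisfies $\,d[g(v,\,\cdot\,)]\sim A-A^*$, and since $\,A-A^*$ commutes with $\,J\,$ by (\ref{csq}-b), criterion (\ref{one}) shows it is of type $\,(1,1)$. Being exact, real and $\,(1,1)$, the special case (\ref{spc}) of the $\,\ddb$ property yields $\,d[g(v,\,\cdot\,)]=i\hh\ddb\psi\,$ for some $\,\psi:M\to\bbR$. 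The identity (\ref{twi}) rewrites this as $\,d[g(v,\,\cdot\,)]=-d[(d\psi)J]/2$, so the choice $\,\phi=-\psi/2\,$ gives $\,d[g(v,\,\cdot\,)]=d[(d\phi)J]$; hence $\,\xi:=g(v,\,\cdot\,)-(d\phi)J\,$ is closed and $\,g(v,\,\cdot\,)=(d\phi)J+\xi$, which is the asserted decomposition at the level of real $\,1$-forms.

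The second half is to check that $\,\xi-i\hh\xi J\,$ is holomorphic; it is of type $\,(1,0)\,$ because $\,\xi\,$ is real, as $\,(\xi-i\hh\xi J)(J\,\cdot\,)=i\hh(\xi-i\hh\xi J)$. The crucial computation is that of $\,\xi J$: from $\,[(d\phi)J]J=-d\phi\,$ and $\,g(v,\,\cdot\,)J=-\omega(v,\,\cdot\,)\,$ one obtains $\,\xi J=-\omega(v,\,\cdot\,)+d\phi$, so that (\ref{lvo}) gives $\,d(\xi J)=-d[\omega(v,\,\cdot\,)]=-\pounds\hskip-1pt_v\w\hs\omega$. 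By Corollary~\ref{sympl} this vanishes, so $\,\xi J\,$ is closed as well. Consequently $\,\xi-i\hh\xi J\,$ is a closed $\,(1,0)$-form, so its $\,(1,1)$-part $\,\cro(\xi-i\hh\xi J)\,$ vanishes, which is exactly the holomorphy condition, and its real part is $\,\xi$. This completes the argument.

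I expect the real content to sit in the identity $\,d(\xi J)=-\pounds\hskip-1pt_v\w\hs\omega$, which in fact holds for \emph{every} admissible $\,\phi$, since altering $\,\phi\,$ changes $\,\xi J\,$ only by the exact form $\,d\phi$. Closedness of $\,\xi\,$ alone (all that the $\,\ddb$ lemma delivers) does not force $\,\xi-i\hh\xi J\,$ to be holomorphic, so the symplectic property $\,\pounds\hskip-1pt_v\w\hs\omega=0\,$ of Corollary~\ref{sympl} is the indispensable extra input that closes $\,\xi J$. The main obstacle is therefore conceptual rather than computational: recognizing that the single function $\,\phi\,$ produced by the $\,\ddb$ lemma kills $\,d\xi$, while the affine–symplectic mechanism independently kills $\,d(\xi J)$, and that these two closedness statements combine to exhibit a closed — hence holomorphic — $\,(1,0)$-form.
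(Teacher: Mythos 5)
Your proposal is correct and follows essentially the same route as the paper's own proof: exactness and type $(1,1)$ of $\,d[g(v,\,\cdot\,)]\,$ via (\ref{dxi}), (\ref{csq}-b) and (\ref{one}), then (\ref{spc}) with (\ref{twi}) to produce $\,\phi\,$ (your $\,\phi=-\psi/2\,$ is just the paper's normalization $\,\alpha=-d\hh[g(v,\,\cdot\,)]/2\,$ absorbed into the choice of function), and finally closedness of $\,\xi J\,$ from (\ref{lvo}) and Corollary~\ref{sympl}. Your closing observation that Corollary~\ref{sympl} is the indispensable input forcing $\,d(\xi J)=0\,$ accurately identifies where the real content lies.
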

\begin{proof}The assertions (\ref{dxi}), (\ref{csq}.b), and (\ref{one})
applied to $\,B=A\,-A\nh^*\nh$, show that $\,d\hh[g(v,\,\cdot\,)]\,$ is an
exact $\,(1,1)$-form. Choosing $\,\phi\,$ as in (\ref{spc}) for
$\,\alpha=-d\hh[g(v,\,\cdot\,)]/2$, we now see that, by (\ref{twi}), the
$\,1$-form $\,\xi=g(v,\,\cdot\,)-(d\phi)J\,$ is closed. However,
$\,\xi J=d\phi-g(Jv,\,\cdot\,)=d\phi-\omega(v,\,\cdot\,)\,$ is also closed,
due to (\ref{lvo}) and Corollary~\ref{sympl}. 
The hol\-o\-mor\-phic\-i\-ty of $\,\xi-i\hh\xi\hn J\,$ now follows from
(\ref{ofh}).
\end{proof}
Theorem~\ref{holof} does not seems to be relevant to the question stated
in the Introduction: for instance, vanishing of the hol\-o\-mor\-phic
$\,1$-form $\,\xi-i\hh\xi J\,$ (which follows if $\,M\,$ is simply connected)
does not lead to any immediate answer. This stands in marked contrast with
the next result.
\begin{theorem}\label{holtf}Suppose that\/ $\,v\,$ is an af\-fine vector field
on a compact pseu\-do\hs-\hn K\"ahler\/ $\,\ddb$ manifold\/ $\,(M\nh,g)$. 
Then the\/ $\,(0,2)\,$ tensor field
$\,\pounds\hskip-1.5pt_v\w g\,$ is the real part of a 
hol\-o\-mor\-phic section\/ $\,\theta\,$ of the second complex symmetric power
of the complex dual of\/ $\,T\nh M\nh$, with the convention\/
{\rm(\ref{tre})}.

Consequently, $\,\pounds\hskip-1.5pt_v\w g=0\,$ if no such nonzero
hol\-o\-mor\-phic section\/ $\,\theta\,$ exists.
\end{theorem}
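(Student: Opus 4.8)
The plan is to identify $\,\pounds_v g\,$ with a parallel symmetric $\,(0,2)$-tensor of pure type $\,(2,0)+(0,2)\,$ and then to exploit the principle that a parallel tensor of holomorphic type is automatically holomorphic. First I would set $\,\beta=\pounds_v g\,$ and recall from (\ref{lvg}.a) that $\,\beta\sim B\,$ with $\,B=A\hs+\hn A\nh^*\nh$. By (\ref{csq}-d) the endomorphism $\,B\,$ is parallel, and since $\,\nabla g=0\,$ the tensor $\,\beta=g(B\hs\cdot\,,\,\cdot\,)\,$ is itself parallel. By (\ref{csq}-c), $\,B\,$ anti\-com\-mutes with $\,J$; combined with $\,g\,$ being Hermitian (equivalently, $\,J\,$ being $\,g$-skew-ad\-joint) this yields $\,\beta(J\hs\cdot\,,J\hs\cdot\,)=-\beta\,$ and the symmetry of $\,\beta(J\hs\cdot\,,\,\cdot\,)$, so that $\,\beta\,$ has vanishing $\,(1,1)$ component.

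Next I would package $\,\beta\,$ into a complex quadratic differential by setting $\,\theta(X,Y)=\beta(X,Y)-i\hh\beta(JX,Y)$. Using $\,\beta(J\hs\cdot\,,J\hs\cdot\,)=-\beta\,$ one checks that $\,\theta(JX,\,\cdot\,)=i\hh\theta(X,\,\cdot\,)$, i.e.\ $\,\theta\,$ is com\-plex-lin\-e\-ar in each argument, while the symmetry of $\,\beta(J\hs\cdot\,,\,\cdot\,)\,$ gives symmetry of $\,\theta$. Thus $\,\theta\,$ is a section of the second complex symmetric power of the complex dual of $\,T\nh M\nh$, with the convention (\ref{tre}), and $\,\mathrm{Re}\,\theta=\beta=\pounds_v g\,$ because both $\,\beta\,$ and $\,\beta(J\hs\cdot\,,\,\cdot\,)\,$ are real-val\-ued.

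Finally, for hol\-o\-mor\-phic\-i\-ty: since $\,\theta\,$ is built algebraically from the parallel tensors $\,\beta\,$ and $\,J\,$ (recall $\,\nabla\nnh J=0$), it is parallel, $\,\nabla\theta=0$. On a pseu\-do\hs-\hn K\"ahler manifold the $\,(0,1)\,$ part of the Le\-vi-Ci\-vi\-ta connection acting on sections of this bundle coincides with the Dol\-beault operator $\,\cro\,$ -- a fact depending only on $\,\nabla\nnh J=0\,$ and the in\-te\-gra\-bi\-li\-ty of $\,J$, not on def\-i\-nite\-ness of $\,g\,$ -- so $\,\cro\theta=\nabla^{0,1}\theta=0$, i.e.\ $\,\theta\,$ is hol\-o\-mor\-phic. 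The stated consequence is then immediate: if no nonzero such $\,\theta\,$ exists, then $\,\theta=0$, whence $\,\pounds_v g=\mathrm{Re}\,\theta=0$. I expect the main obstacle to be precisely this last identification $\,\nabla^{0,1}=\cro\,$ in the indefinite setting, together with the sign and real-part book\-keep\-ing in passing between $\,\beta\,$ and $\,\theta$; everything else is formal once (\ref{csq}-c) and (\ref{csq}-d) are in hand.
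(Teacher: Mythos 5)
Your proposal is correct, and its algebraic core is exactly the paper's proof: the identity $[\pounds_v\, g](J\cdot,J\cdot)=-\pounds_v\, g$ obtained from (\ref{lvg}.a) and (\ref{csq}-c), the complexification $\theta=\pounds_v\, g-i\,[\pounds_v\, g](J\cdot,\cdot)$ with its complex bilinearity, symmetry and parallelism, and the trivial deduction of the final consequence. Where you genuinely diverge is the last step, passing from $\nabla\theta=0$ to holomorphicity. The paper argues this in an elementary, self-contained way: by Remark~\ref{holom}, for local real holomorphic vector fields $w,w'$ the function $\theta(w,w')$ satisfies the Cauchy--Riemann condition, since $\nabla w$ and $\nabla w'$ commute with $J$ while $\theta$ is parallel and complex-bilinear, and such fields span each tangent space locally. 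You instead invoke the identification of the $(0,1)$-part of the Levi-Civita connection on holomorphic tensor bundles with the Dolbeault operator. That fact is true and, as you suspected might be the issue, causes no trouble in the indefinite setting -- indeed it does not use the metric at all, only torsion-freeness and $\nabla J=0$: for a section $Z$ of $T^{1,0}M$ one has $\bar\partial_{\bar W}Z=[\bar W,Z]^{1,0}=(\nabla_{\bar W}Z-\nabla_Z\bar W)^{1,0}=\nabla_{\bar W}Z$, because a connection with $\nabla J=0$ preserves the type decomposition; dualizing and passing to symmetric powers gives the statement for your bundle. So both arguments are sound: yours appeals to (pseudo-)Chern-connection machinery that strictly speaking needs the short verification just indicated, while the paper's evaluation on real holomorphic fields keeps everything at the level of the Cauchy--Riemann equations and the facts already established in Section~\ref{pr}.
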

\begin{proof}By (\ref{lvg}.a) and (\ref{csq}-c), 
$\,[\pounds\hskip-1pt_v\w\hs g](J\hs\cdot\,,J\hs\cdot\hs)
=-\pounds\hskip-1pt_v\w\hs g$, so that 
$\,\theta=\pounds\hskip-1pt_v\w\hs g
-i[\pounds\hskip-1pt_v\w\hs g](J\hs\cdot\,,\,\cdot\,)\,$ is
com\-plex-bi\-lin\-e\-ar at every point. As it is parallel -- by (\ref{lvt})
-- its hol\-o\-mor\-phic\-i\-ty follows: Remark~\ref{holom} easily implies
that, for any (local) real hol\-o\-mor\-phic vector fields $\,w,w'\nh$, the
function $\,\theta(w,w')\,$ is hol\-o\-mor\-phic.
\end{proof}
A complex manifold $\,M\,$ is called {\it almost homogeneous\/}
\cite{oeljeklaus} if, for 
some $\,x\in M\nh$, every vector in $\,T\hskip-2.9pt_x\w\hn M\,$ is the value
at $\,x\,$ of some real hol\-o\-mor\-phic vector field.
\begin{corollary}\label{lochg}If\/ $\,g\,$ is a pseu\-do\hs-\nh K\"ahler
metric on a 
compact almost homogeneous complex\/ $\,\ddb$ manifold\/ $\,M\,$ 
with nonzero Euler characteristic\/ $\,\chi(M)$, 
then every $\,g$-af\-fine vector field\/
$\,v\,$ on\/ $\,M\,$ is real hol\-o\-mor\-phic.
\end{corollary}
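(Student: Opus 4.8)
The plan is to reduce everything to Theorem~\ref{holtf} together with part~(\ref{csq}-f) of Theorem~\ref{fourc}. Indeed, Theorem~\ref{holtf} exhibits $\pounds_v g$ as the real part of a holomorphic section $\theta$ of the second symmetric power of the complex dual of $T\nh M$, so it suffices to prove that $\theta=0$: this gives $\pounds_v g=\mathrm{Re}\,\theta=0$, i.e.\ $v$ is a Killing field, and then (\ref{csq}-f) makes $v$ real holomorphic. Thus the entire content of the corollary is the vanishing of $\theta$ under the stated hypotheses.

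First I would combine the holomorphicity of $\theta$ with compactness. For any global real holomorphic vector field $w$ on $M$, the proof of Theorem~\ref{holtf} shows that $\theta(w,w)$ is a holomorphic function, hence constant since $M$ is compact and connected. This is exactly where the Euler characteristic enters: because $\chi(M)\neq0$, the Poincar\'e--Hopf theorem forces $w$ to vanish at some point $p\in M$, so $\theta(w,w)(p)=\theta_p(w(p),w(p))=0$ and the constant must be $0$. Hence $\theta(w,w)\equiv0$ for every real holomorphic $w$. Since the sum of two real holomorphic vector fields is again one, polarizing the identity $\theta(w+w',w+w')=0$ and using the symmetry and complex-bilinearity of $\theta$ yields $\theta(w,w')\equiv0$ for all real holomorphic $w,w'$.

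Next I would invoke almost homogeneity. At the distinguished point $x\in M$ of the definition, every vector in $T_xM$ equals $w(x)$ for some real holomorphic $w$, so the previous step gives $\theta_x(u,u')=0$ for all $u,u'\in T_xM$, i.e.\ $\theta_x=0$. Finally, $\theta$ is parallel (as recorded in the proof of Theorem~\ref{holtf}, being assembled from the parallel tensor $\pounds_v g$ and the parallel data $g$ and $J$), so its vanishing at the single point $x$ propagates by parallel transport to all of the connected manifold $M$, giving $\theta\equiv0$. Then $\pounds_v g=0$, and (\ref{csq}-f) finishes the proof.

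The main obstacle is purely one of orchestration rather than computation: compactness alone only tells us that $\theta(w,w)$ is some constant, and the real work is identifying that constant as zero, which is precisely what the hypothesis $\chi(M)\neq0$ supplies through the guaranteed zero of $w$. Almost homogeneity then provides enough vector fields $w$ to pin down all of $\theta_x$, and parallelism upgrades $\theta_x=0$ to $\theta\equiv0$; the delicate part is recognizing that these three global inputs must be used together in exactly this order.
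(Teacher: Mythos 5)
Your proposal is correct and is essentially the paper's own argument: both rest on the same four ingredients -- constancy of $\theta(w,w')$ for real holomorphic fields on the compact manifold, the Poincar\'e--Hopf theorem, almost homogeneity at the distinguished point $x$, and parallelism of $\theta$ -- together with (\ref{csq}-f) to pass from Killing to real holomorphic. The only difference is logical arrangement: the paper argues by contradiction (a nonzero parallel $\theta$ is nonzero everywhere, so almost homogeneity yields a real holomorphic $w$ with $\theta(w,w)$ a nonzero constant, hence $w$ nowhere vanishing and $\chi(M)=0$), whereas you argue directly, using the guaranteed zero of each $w$ to kill the constant $\theta(w,w)$, polarization and almost homogeneity to get $\theta_x=0$, and parallelism to propagate this to all of $M$; this is a contrapositive restatement rather than a different route.
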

In fact, let $\,\theta\,$ be the hol\-o\-mor\-phic quadratic differential 
mentioned in
Theorem~\ref{holtf}. Thus, $\,\theta(w,w')\,$ is constant for any 
real hol\-o\-mor\-phic vector fields $\,w,w'\nh$. If $\,\theta\,$ were
nonzero -- everywhere, due to its being parallel by (\ref{lvt}) -- choosing
$\,x\,$ as above and a real hol\-o\-mor\-phic vector field $\,w\,$ with
$\,\theta(w,w)\ne0\,$ at $\,x\,$ we would get $\,w\ne0\,$ everywhere, and
hence, by the Poin\-ca\-r\'e-Hopf theorem, $\,\chi(M)=0$.

\section*{Conflict of interest statement}
\setcounter{equation}{0}
The author states that there is no conflict of interest.

\newpage


\begin{thebibliography}{99}

\bibitem{ballmann}W\nnh.\hskip2.3ptBallmann, {\em Lectures on K\"ah\-ler
Manifolds}, ESI Lectures in Mathematics and Physics, European Mathematical
Society, Z\"urich, 2006.

\bibitem{boubel-mounoud}C.\hskip2.3ptBoubel and P\nh.\hskip2.3ptMounoud, {\em 
Af\-fine transformations and parallel light\-like vector fields on compact 
Lo\-rentz\-i\-an $\,3$-man\-i\-folds}, Trans.\ Amer.\ Math.\ Soc.
\textbf{368} (2016), 
2223--2262.

\bibitem{derdzinski-terek}A.\hskip2.3ptDerdzinski and I.\hskip2.3ptTerek, 
{\em Kil\-ling fields on compact pseu\-do\hs-\hn K\"ahler manifolds}, 
J.\ Geom.\ Anal. \textbf{34} (2024), 
art.\ 144, 7 pp.
  
\bibitem{deligne-griffiths-morgan-sullivan}P\nnh.\hskip2.3ptDeligne,
P\nnh.\hskip2.3ptGriffiths, J.\hskip2.3ptMorgan and D.\hskip2.3ptSullivan,
{\em Real homotopy theory of K\"ah\-ler manifolds}, Invent.\ Math.
\textbf{29} (1975), 
245--274.

\bibitem{kobayashi}S.\hskip2.3ptKobayashi, {\em Transformation Groups in
Differential Geometry}, Spring\-er-Ver\-lag, Ber\-lin-Heidel\-berg-New York,
1972.

\bibitem{lee}J.\hskip2.3ptM.\hskip2.3ptLee, {\em Introduction to Smooth   
Manifolds}, 2nd ed., Grad.\,Texts in Math.\ \textbf{218},
Spring\-er-Ver\-lag, New York, 2013.

\bibitem{oeljeklaus}E.\hskip2.3ptOeljeklaus, {\em Almost homogeneous
manifolds}, Inst.\,\'Elie Car\-tan \textbf{8}, Uni\-ver\-si\-t\'e de Nancy,
Ins\-ti\-tut \'Elie Car\-tan, Nancy, 1983, 172--193.

\bibitem{schouten-yano}J.\hskip2.3ptA.\hskip2.3ptSchouten and
K,\hskip2.3ptYano, {\em On pseu\-do\hs-\hn K\"ah\-ler\-i\-an spaces admitting
a continuous group of motions}, Nederl.\,Akad.\,Wetensch.\,Proc.\,Ser.\,A
\textbf{58}, Indag.\, Math. \textbf{17} (1955), 565--570.

\bibitem{voisin}C.\hskip2.3ptVoisin, {\em Hodge Theory and Complex Algebraic
Geometry, I}, Cambridge Stud.\ Adv.\ Math.\ \textbf{76}, Cambridge University
Press, Cambridge, 2002.

\end{thebibliography}
\end{document}